\newtheorem{precor}{{\bf Corollary}}
\newtheorem{precon}{{\bf Conjecture}}
\newtheorem{prealphcon}{{\bf Conjecture}}
\newtheorem{predefin}{{\bf Definition}}
\newtheorem{preexm}{{\bf Example}}
\newtheorem{preappl}{{\bf Application}}
\newtheorem{prelem}{{\bf Lemma}}
\newenvironment{lem}{\begin{prelem}{\hspace{-0.5
               em}{\bf.\ }}}{\end{prelem}}
\newtheorem{preproof}{{\bf Proof.\ }}
\newenvironment{proof}[1]{\begin{preproof}{\rm
               #1}\hfill{$\blacksquare$}}{\end{preproof}}
\newtheorem{pretheorem}{{\bf Theorem}}
\newenvironment{theorem}{\begin{pretheorem}{\hspace{-0.5
               em}{\bf.\ }}}{\end{pretheorem}}
\newtheorem{prealphtheorem}{{\bf Theorem}}
\newtheorem{prealphlem}{{\bf Lemma}}
\newtheorem{prepro}{{\bf Proposition}}
\newtheorem{preprb}{{\bf Problem}}
\newtheorem{prerem}{{\bf Remark}}
\newtheorem{preapp}{{\bf Application}}
\newtheorem{prequ}{{\bf Question}}
\def\conct[#1,#2]{\mbox {${#1} \leftrightarrow {#2}$}}
\def\dconct[#1,#2]{\mbox {${#1} \rightarrow {#2}$}}
\def\deg[#1,#2]{\mbox {$d_{_{#1}}(#2)$}}
\def\mindeg[#1]{\mbox {$\delta_{_{#1}}$}}
\def\maxdeg[#1]{\mbox {$\Delta_{_{#1}}$}}
\def\outdeg[#1,#2]{\mbox {$d_{_{#1}}^{^+}(#2)$}}
\def\minoutdeg[#1]{\mbox {$\delta_{_{#1}}^{^+}$}}
\def\maxoutdeg[#1]{\mbox {$\Delta_{_{#1}}^{^+}$}}
\def\indeg[#1,#2]{\mbox {$d_{_{#1}}^{^-}(#2)$}}
\def\minindeg[#1]{\mbox {$\delta_{_{#1}}^{^-}$}}
\def\maxindeg[#1]{\mbox {$\Delta_{_{#1}}^{^-}$}}
\def\dre[#1,#2,#3]{\mbox {${\cal E}^{^{#3}}(#1,#2)$}}
\def\var[#1,#2]{\mbox {${\rm Var}_{_{#1}}(#2)$}}
\def\ls[#1]{\mbox {$\xi^{^{#1}}$}}
\def\hom[#1,#2]{\mbox {${\rm Hom}({#1},{#2})$}}
\def\onvhom[#1,#2]{\mbox {${\rm Hom^{v}}(#1,#2)$}}
\def\onehom[#1,#2]{\mbox {${\rm Hom^{e}}(#1,#2)$}}
\def\core[#1]{\mbox {$#1^{^{\bullet}}$}}
\def\cay[#1,#2]{\mbox {${\rm Cay}({#1},{#2})$}}
\def\sch[#1,#2,#3]{\mbox {${\rm Sch}({#1},{#2},{#3})$}}
\def\cays[#1,#2]{\mbox {${\rm Cay_{s}}({#1},{#2})$}}
\def\dirc[#1]{\mbox {$\stackrel{\rightarrow}{C}_{_{#1}}$}}
\def\cycl[#1]{\mbox {${\bf Z}_{_{#1}}$}}
\begin{document}

\begin{center}
{\Large \bf The b-Chromatic Number of Regular Graphs via The Edge Connectivity}\\
\vspace{0.3 cm}
{\bf Saeed Shaebani}\\
{\it Department of Mathematical Sciences}\\
{\it Institute for Advanced Studies in Basic Sciences {\rm (}IASBS{\rm )}}\\
{\it P.O. Box {\rm 45195-1159}, Zanjan, Iran}\\
{\tt s\_shaebani@iasbs.ac.ir}\\ \ \\
\end{center}
\begin{abstract}
\noindent The b-chromatic number of a graph $G$, denoted by
$\varphi(G)$, is the largest integer $k$ that $G$ admits a proper
coloring by $k$ colors, such that each color class has a vertex
that is adjacent to at least one vertex in each of the other
color classes. El Sahili and Kouider [About b-colorings of regular
graphs, Res. Rep. $1432$, LRI, Univ. Orsay, France, 2006] asked
whether it is true that every $d$-regular graph $G$ of girth at
least $5$ satisfies $\varphi(G)=d+1$. Blidia, Maffray, and Zemir
[On b-colorings in regular graphs, Discrete Appl. Math. 157
(2009), 1787-1793] showed that the Petersen graph provides a
negative answer to this question, and then conjectured that the
Petersen graph is the only exception. In this paper, we
investigate a strengthened form of the question.

The edge connectivity of a graph $G$, denoted by $\lambda(G)$, is
the minimum cardinality of a subset $U$ of $E(G)$ such that
$G\setminus U$ is either disconnected or a graph with only one
vertex. A $d$-regular graph $G$ is called super-edge-connected if
every minimum edge-cut is the set of all edges incident with a
vertex in $G$, i.e., $\lambda(G)=d$ and every minimum edge-cut of
$G$ isolates a vertex. We show that if $G$ is a $d$-regular graph
that contains no $4$-cycle, then $\varphi(G)=d+1$ whenever $G$ is
not super-edge-connected.
\\

\noindent {\bf Keywords:}\ {b-chromatic number, edge connectivity, super-edge-connected.}\\
{\bf Subject classification: 05C15}
\end{abstract}
\section{Introduction}

All graphs considered in this paper are finite and simple
(undirected, loopless, and without multiple edges). Let $G=(V,E)$
be a graph. A  ({\it proper vertex}) {\it coloring} of $G$, is a
function $f_{G}:V(G)\rightarrow C$ such that for each $\{u,v\}$
in $E(G)$, $f_{G}(u)\neq f_{G}(v)$. Each $c$ in $C$ is called a
color. Also, for each $c$ in $C$, $f_{G}^{-1}(c)$ is called a
color class of $f_{G}$. We say $v$ is colored by $c$ if
$f_{G}(v)=c$. We mean by $\chi (G)$, the minimum cardinality of a
set $C$ that a coloring $f_{G}:V(G)\rightarrow C$ exists. A
$b$-{\it coloring} of the graph $G$ is a coloring
$f_{G}:V(G)\rightarrow C$ such that for each $c$ in $C$, there
exists some vertex $v$ in $V(G)$ such that $f_{G}(v)=c$ and
$f_{G}(N_{G}(v))=C\setminus \{c\}$, where $N_{G}(v):=\{w|\
\{v,w\}\in E(G)\}$. In other words, a coloring of $G$ is called a
b-coloring, if each color class contains a vertex that is
adjacent to at least one vertex in each of the other color
classes. Obviously, each coloring of $G$ with $\chi (G)$ colors,
is a b-coloring. Also, if $f_{G}:V(G)\rightarrow C$ is a
b-coloring, then $|C| \leq \Delta (G) + 1$, where $\Delta (G)$
denotes the maximum degree of $G$. The {\it b-chromatic number}
of $G$, denoted by $\varphi(G)$, is the maximum cardinality of a
set $C$ such that a b-coloring $f_{G}:V(G)\rightarrow C$ exists.
The concept of b-coloring of graphs introduced by Irving and
Manlove in 1999 in \cite{irv}, and has received attention.

Given a graph $G$ and a coloring $f_{G}:V(G)\rightarrow C$,
a vertex $v$ in $V(G)$ is called a {\it color-dominating}
vertex (with respect to $f_{G}$) if $f_{G}(N_{G}(v))=C\setminus \{f_{G}(v)\}$.
In other words, $v$ is a color-dominating vertex, if it is
adjacent to at least one vertex in each of the other color
classes. We say that the color $c$ {\it realizes}, if there
exists a color-dominating vertex which is colored by $c$.

As mentioned, for each graph $G$ with maximum degree $\Delta(G)$,
$\varphi(G)\leq\Delta(G)+1$. Therefore, for each $d$-regular
graph $G$, $\varphi(G)\leq d+1$. Since $d+1$ is the maximum
possible b-chromatic number for $d$-regular graphs, determining
necessary or sufficient conditions to achieve this bound is of
interest. Kratochvil, Tuza, and Voigt \cite{kratv} proved that
every $d$-regular graph $G$ with at least $d^{4}$ vertices
satisfies $\varphi(G)=d+1$. In \cite{ca.ja}, Cabello and Jakovac
lowered $d^{4}$ to $2d^{3}-d^{2}+d$. These amazing bounds confirm
that for each natural number $d$, there are only a finite number
of $d$-regular graphs (up to isomorphism) that their b-chromatic
numbers are not $d+1$. El Sahili and Kouider \cite{sa.ko} asked
whether it is true that every $d$-regular graph $G$ of girth at
least $5$ satisfies $\varphi(G)=d+1$. In this regard, Blidia,
Maffray, and Zemir \cite{b.m.z} showed that the Petersen graph
provides a negative answer to this question. They proved that the
b-chromatic number of the Petersen graph is 3, and then
conjectured that the Petersen graph is the only exception. They
also proved this conjecture for $d\leq6$. In \cite{kou1}, Kouider
proved that the b-chromatic number of any $d$-regular graph of
girth at least 6 is $d+1$. El Sahili and Kouider \cite{sa.ko}
showed that the b-chromatic number of any $d$-regular graph of
girth $5$ that contains no $6$-cycle is $d+1$. In \cite{ca.ja},
Cabello and Jakovac proved a celebrated theorem for the
b-chromatic number of regular graphs of girth $5$, which
guarantees that the b-chromatic number of each $d$-regular graph
with girth $5$, is bounded below by a linear function of $d$.
They proved that a $d$-regular graph with girth at least $5$, has
b-chromatic number at least $\lfloor\frac{d+1}{2}\rfloor$. Also,
they proved that for except small values of $d$, every connected
$d$-regular graph that contains no $4$-cycle and its diameter is
at least $d$, has b-chromatic number $d+1$. It is shown in
\cite{saeed2} that if $G$ is a $d$-regular graph that contains no
$4$-cycle, then $\varphi(G)\geq\lfloor\frac{d+3}{2}\rfloor$. This
lower bound, is sharp for the Petersen graph. Besides, If $G$ has
a triangle, then $\varphi(G)\geq\lfloor\frac{d+4}{2}\rfloor$.
Also, if $G$ is a $d$-regular graph that contains no $4$-cycle and
$diam(G)\geq6$, then $\varphi(G)=d+1$.

The {\it vertex connectivity} of a graph $G$, denoted by
$\kappa(G)$, is the minimum cardinality of a subset $U$ of $V(G)$
such that $G\setminus U$ is either disconnected or a graph with
only one vertex. Also, the {\it edge connectivity} of a graph $G$,
denoted by $\lambda(G)$, is the minimum cardinality of a subset
$U$ of $E(G)$ such that $G\setminus U$ is either disconnected or
a graph with only one vertex. It is well-known that for each
graph $G$, $\kappa(G)\leq \lambda(G)\leq \delta(G)$, where
$\delta(G)$ denotes the minimum degree of $G$. By an {\it
edge-cut} of $G$, we mean a subset of $E(G)$ such that deleting
all of its elements from $G$, yields a disconnected graph.
Therefore, for each graph $G$ with at least two vertices,
$\lambda(G)$ is the minimum cardinality of all edge-cuts of $G$.
For every edge-cut $T$ of $G$, $sat(T)$ stands for the set of all
vertices of $G$ that are saturated by some edges in $T$, i.e.,
$sat(T):=\{ v |\ v\in V(G),$ there exists some $w$ in $V(G)$ for
which $\{v,w\}\in T \}$. We mean by a minimum edge-cut of $G$, an
edge-cut of $G$ with cardinality $\lambda(G)$. A $d$-regular
graph $G$, is called {\it super-edge-connected}, if every minimum
edge-cut of $G$, is the set of all edges incident with a vertex
in $G$, i.e., $\lambda(G)=d$ and deleting each minimum edge-cut
of $G$ from $G$, yields a graph which has an isolated vertex.

An edge-cut of a graph $G$ is called {\it trivial} whenever it is
equal to the set of all edges incident with a vertex of $G$. With
this terminology, a $d$-regular graph $G$ is super-edge-connected
if and only if every minimum edge-cut of $G$ is trivial.

It has been proved in \cite{saeed2} that for any $d$-regular
graph $G$ that contains no $4$-cycle, if $\kappa(G)\leq
\frac{d+1}{2}$, then $\varphi(G)=d+1$. This upper bound is sharp
in the sense that the vertex connectivity of the Petersen graph
is $\frac{d+1}{2}+1$; nevertheless, its b-chromatic number is not
$d+1$. Also, if $\kappa(G)<\frac{3d-3}{4}$, then
$\min\{2(d-\kappa(G)+1),d+1\}\leq\varphi(G)\leq d+1$.
Furthermore, if there exists a subset $U$ of $V(G)$ such that
$|U|=\kappa(G)$ and $G\setminus U$ has at least four connected
components, then $\varphi(G)=d+1$. Moreover, if
$\kappa(G)<\frac{2d-1}{3}$ and there exists a subset $U$ of
$V(G)$ such that $|U|=\kappa(G)$ and $G\setminus U$ has at least
three connected components, then $\varphi(G)=d+1$. If the girth
of $G$ is $5$, $\frac{3d-3}{4}$ and $\frac{2d-1}{3}$ can be
replaced by $\frac{3d}{4}$ and $\frac{2d+1}{3}$, respectively.

In this paper, we investigate the b-chromatic number of
$d$-regular graphs with no $4$-cycles. We show that if $G$ is a
$d$-regular graph that contains no $4$-cycle, then
$\varphi(G)=d+1$ whenever $G$ is not super-edge-connected.
Throughout the paper, for each nonnegative integer $n$, the
symbol $[n]$ stands for the set $\{i|\ i\in \mathbb{N},\ 1\leq
i\leq n \}$.

\section{ The Main Result}

This section concerns a relation between the b-chromatic number
and the edge-connectivity in $d$-regular graphs that do not
contain $4$-cycles. We show that every $d$-regular graph that
does not contain $C_{4}$ as a subgraph, achieves the maximum
b-chromatic number $d+1$, unless it is super-edge-connected. In
this regard, first we mention Lemma \ref{majorlemma} and Lemma
\ref{mainlemma}; the former is related to the super-edge-connected
graphs without $4$-cycle, and the latter presents a sufficient
condition for bipartite graphs to have a perfect matching.

\begin{lem} \label{majorlemma}
{Let $d\geq 4$ and $G$ be a $d$-regular graph that contains no
$4$-cycle, and $T$ be a minimum edge-cut of $G$ which is not
trivial. Suppose that $G_{1},\ldots,G_{l}$ are connected
components of $G\setminus T$. Then for each $i$ in $[l]$, there
exists some $a_{i}$ in $V(G_{i}) \setminus sat(T)$ such that
$|N_{G}(a_{i})\bigcap sat(T)|\leq 2$. }
\end{lem}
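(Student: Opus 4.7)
Argue by contradiction: suppose there is an $i$ for which no $a_i \in V(G_i) \setminus sat(T)$ satisfies $|N_G(a_i) \cap sat(T)| \leq 2$. Set $S_i := V(G_i) \cap sat(T)$, $X_i := V(G_i) \setminus sat(T)$, and let $t_i := |E_G(V(G_i), V(G) \setminus V(G_i))|$. Since this edge set is itself an edge-cut of $G$, minimality of $T$ gives $t_i \leq |T| \leq d$, and since every vertex of $S_i$ is incident with at least one $T$-edge, $|S_i| \leq t_i \leq d$. Non-triviality of $T$ forces $|V(G_i)| \geq 2$. In Case 1, $X_i = \emptyset$: summing degrees yields $2e(G_i) = d|V(G_i)| - t_i \geq d(|V(G_i)|-1)$, which combined with $e(G_i) \leq \binom{|V(G_i)|}{2}$ gives $(|V(G_i)|-1)(|V(G_i)|-d) \geq 0$, forcing $|V(G_i)| = d$ and $G_i \cong K_d$ -- but $K_d$ contains $C_4$ for $d \geq 4$, contradiction.

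In Case 2, $X_i \neq \emptyset$ and every $v \in X_i$ has $|N_G(v) \cap S_i| \geq 3$. Pick $v_0 \in X_i$ minimizing $m := |N_G(v_0) \cap S_i| \geq 3$; all $d$ neighbors of $v_0$ lie in $V(G_i)$, so $X_{v_0} := N_G(v_0) \cap X_i$ has $|X_{v_0}| = d - m$. If $m = d$, then $N_G(v_0) = S_i$ has size $d$, and $C_4$-freeness forces any other $w \in X_i$ to satisfy $|N_G(w) \cap S_i| = |N_G(w) \cap N_G(v_0)| \leq 1$, a contradiction; so $X_i = \{v_0\}$. Then $C_4$-freeness forces $G[S_i]$ to be a matching (two $S_i$-neighbors of some $u \in S_i$ would give $u$ and $v_0$ as two common neighbors of a pair in $S_i$), so each $u \in S_i$ has degree $\leq 2$ in $G_i$ and at least $d - 2$ $T$-edges, giving $t_i \geq d(d-2) > d$, contradiction.

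Hence $3 \leq m \leq d - 1$ and $X_{v_0} \neq \emptyset$. For each $v' \in X_{v_0}$, $C_4$-freeness gives $|N_G(v') \cap N_G(v_0)| \leq 1$; as $v_0 \in N_G(v')$, $v'$ has at most two neighbors in $N_G[v_0]$, hence at least $d - 2$ in $V(G_i) \setminus N_G[v_0]$. For distinct $v', v'' \in X_{v_0}$, $|N_G(v') \cap N_G(v'')| \leq 1$, so inclusion-exclusion gives
\[
|V(G_i) \setminus N_G[v_0]| \;\geq\; |X_{v_0}|(d - 2) - \binom{|X_{v_0}|}{2} \;=\; \frac{(d - m)(d + m - 3)}{2},
\]
and hence $|X_i| \geq 1 + (d-m)(d+m-3)/2$. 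On the other hand, the standard $C_4$-free pair count gives $\binom{m}{2}|X_i| \leq \sum_{v \in X_i}\binom{|N_G(v) \cap S_i|}{2} \leq \binom{|S_i|}{2} \leq d(d-1)/2$, i.e., $|X_i| \leq d(d-1)/[m(m-1)]$. Combining the two bounds yields the polynomial inequality $m(m-1)[2 + (d-m)(d+m-3)] \leq 2d(d-1)$, which fails for every $d \geq 4$ and $3 \leq m \leq d - 1$; at $m = 3$ it reduces to $(d-1)(d-3) \leq 0$, impossible for $d \geq 4$, giving the desired contradiction.

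The main obstacle is setting up the iterated lower bound on $|V(G_i) \setminus N_G[v_0]|$: one must simultaneously use the per-vertex bound ($v' \in X_{v_0}$ has at most two neighbors in $N_G[v_0]$) and the pairwise bound ($|N_G(v') \cap N_G(v'')| \leq 1$), both consequences of $C_4$-freeness, and then verify that the resulting two-parameter polynomial inequality has no solutions throughout the range $d \geq 4$, $3 \leq m \leq d - 1$; this is a routine but slightly delicate verification.
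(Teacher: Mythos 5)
Your argument is correct, and it takes a genuinely different route from the paper's. The paper first proves $|V(G_i)|\geq d+4$ by invoking the extremal bound $\frac{n}{4}(1+\sqrt{4n-3})$ on the number of edges of an $n$-vertex $C_4$-free graph (plus a monotonicity computation), then selects four vertices of $V(G_i)\setminus sat(T)$, finds one, say $x_j$, with a neighbour $y$ outside $sat(T)$ having at most one neighbour in $A_i\setminus N_G(x_j)$, and observes that such a $y$ has at most two neighbours in $A_i$. You instead argue by contradiction with two complementary counting estimates around a vertex $v_0$ minimizing $m=|N_G(v_0)\cap S_i|$: a Bonferroni-type lower bound on $|X_i|$ via the second neighbourhood of $v_0$, against the standard $C_4$-free pair count $\binom{m}{2}|X_i|\leq\binom{|S_i|}{2}$; the degenerate cases $X_i=\varnothing$ and $m=d$ are excluded directly. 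Your approach is more self-contained (no appeal to the extremal-number bound or to calculus), at the cost of a case analysis and a two-parameter inequality; the paper's approach reduces everything to a single local argument once the size bound $|V(G_i)|\geq d+4$ is available.

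Two steps need one more line each, though both check out. First, the inference from $|V(G_i)\setminus N_G[v_0]|\geq\frac{(d-m)(d+m-3)}{2}$ to $|X_i|\geq 1+\frac{(d-m)(d+m-3)}{2}$ is not immediate as written, since $V(G_i)\setminus N_G[v_0]$ may meet $S_i$; it does follow from $|X_i|=|V(G_i)|-|S_i|\geq\bigl(|N_G[v_0]|+|V(G_i)\setminus N_G[v_0]|\bigr)-d=1+|V(G_i)\setminus N_G[v_0]|$, using $|N_G[v_0]|=d+1$ and $|S_i|\leq d$. Second, the inequality $m(m-1)\bigl[2+(d-m)(d+m-3)\bigr]>2d(d-1)$ does hold throughout $d\geq4$, $3\leq m\leq d-1$: since $2+(d-m)(d+m-3)=(d-1)(d-2)-m(m-3)\geq 2(d-1)$ for $m\leq d-1$, the left side is at least $2m(m-1)(d-1)$, which settles the case $m(m-1)>d$; when $m(m-1)\leq d$ one has $(d-1)(d-2)-m(m-3)\geq d^2-4d+2$ and $m(m-1)\geq6$, giving a left side of at least $6(d^2-4d+2)>2d(d-1)$ for $d\geq5$, and $d=4$ (where only $m=3$ occurs) is checked directly. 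Finally, a small wording slip: $t_i\leq|T|$ holds because every edge leaving $V(G_i)$ belongs to $T$; minimality of $T$ gives the opposite inequality $t_i\geq|T|$. None of this affects the correctness of your proof.
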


\begin{proof}
{ Let us regard an arbitrary $i$ in $[l]$ as fixed. Set
$A_{i}:=V(G_{i}) \bigcap sat(T)$ and $s:=|V(G_{i})|$. Since $T$ is
not trivial, $s\geq 2$. Obviously, $s\neq 2$; otherwise, the
number of edges between $V(G_{i})$ and $V(G) \setminus V(G_{i})$
in the graph $G$ is at least $2(d-1)$. So $|T| \geq 2(d-1) \geq
d+2$, a contradiction. Hence, $s\geq 3$. It is well-known that
the number of edges of a graph with $n$ vertices which contains
no $4$-cycle is at most $\frac{n}{4} (1+\sqrt{4n-3})$. Since the
graph $G_{i}$ does not contain any $4$-cycles, $sd=\Sigma_{x\in
V(G_{i})}deg_{G}(x)\leq \frac{s}{2} (1+\sqrt{4s-3})+|T|\leq
\frac{s}{2} (1+\sqrt{4s-3})+d$. Hence,

$$
\begin{array}{cclc}
  (s-1)d\leq \frac{s}{2} (1+\sqrt{4s-3}) & \Longrightarrow & 2(s-1)d-s\leq s\sqrt{4s-3} &  \\
  & \Longrightarrow & (2d-1)(s-1)-1\leq s\sqrt{4s-3} &  \\
  & \Longrightarrow & (2d-2)(s-1) < s\sqrt{4s-3}  & ({\rm since}\ 2<s) \\
  & \Longrightarrow & d-1 < \frac{s}{2(s-1)} \sqrt{4s-3}. &
\end{array}
$$

One can easily observe that the derivative of the function
$f(x)=\frac{x}{2(x-1)} \sqrt{4x-3}-(d-1)$ is positive for $x\in
[3,+\infty)$. So $f$ is strictly increasing in the interval
$[3,+\infty)$. Thus, proving $f(d+3) < 0$, implies $s \geq d+4$.
Since the derivative of the function $g(y)=\frac{y+3}{2(y+2)}
\sqrt{4y+9}-(y-1)$ is negative for $y\in (0,+\infty)$ and
$g(4)=\frac{-1}{3} < 0$, we obtain that for each natural number
$d$ which $d \geq 4$, $g(d) < 0$. Hence, $f(d+3)=g(d) < 0$; and
therefore, $s \geq d+4$. Since $|A_{i}| \leq |T| \leq d $,
$|V(G_{i})\setminus A_{i}| \geq 4$. Now, consider four elements
$x_{1}$, $x_{2}$, $x_{3}$, and $x_{4}$, in $V(G_{i})\setminus
A_{i}$. Since $G$ does not have any $4$-cycles, there exists some
$j$ in $\{1,2,3,4\}$ such that $|N_{G}(x_{j})\bigcap A_{i}| < d$.
So $|N_{G}(x_{j}) \setminus A_{i}| > 0$. Set $N_{G}(x_{j})
\setminus A_{i}=\{y_{k} | 1\leq k \leq |N_{G}(x_{j}) \setminus
A_{i}|\}$. Obviously, there exists some $k$ in $[|N_{G}(x_{j})
\setminus A_{i}|]$ such that $|N_{G}(y_{k}) \bigcap
(A_{i}\setminus N_{G}(x_{j}))| \leq 1$; otherwise, $d\geq |A_{i}|
\geq |N_{G}(x_{j})\bigcap A_{i}| + 2|N_{G}(x_{j}) \setminus
A_{i}| = d + |N_{G}(x_{j}) \setminus A_{i}|
> d$, which is impossible. We conclude that there exists an
element $y_{k}$ in $N_{G}(x_{j}) \setminus A_{i}$ for which
$|N_{G}(y_{k}) \bigcap (A_{i}\setminus N_{G}(x_{j}))| \leq 1$.
Since $y_{k}$ has at most one neighbor in $N_{G}(x_{j})\bigcap
A_{i}$; hence, $y_{k}$ has at most two neighbors in $A_{i}$.
Therefore, $a_{i}:=y_{k}$ is a desired vertex.

}
\end{proof}

\begin{lem} \label{mainlemma} {\rm \cite{ca.ja}}
{Let $H$ be a bipartite graph with parts $U$ and $V$ such that
$|U|=|V|$. Let $u^{*}\in U$ and $v^{*}\in V$. If for each vertex
$x$ in $V(H)\setminus\{u^{*},v^{*}\}$,
$deg_{H}(x)\geq\frac{|V|}{2}$, $deg_{H}(u^{*})>0$, and
$deg_{H}(v^{*})>0$, then $H$ has a perfect matching. }
\end{lem}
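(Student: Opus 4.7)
The plan is to verify Hall's marriage condition for $H$: since $|U|=|V|$, it suffices to show that every $S\subseteq U$ satisfies $|N_{H}(S)|\geq |S|$. I would argue by contradiction, fixing some $S\subseteq U$ with $|N_{H}(S)|<|S|$ and writing $n:=|V|$ and $T:=V\setminus N_{H}(S)$, so that no edge of $H$ joins $S$ to $T$ and $|S|+|T|\geq n+1$.

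The core observation is a two-sided counting bound. If $S$ contains any vertex $s\ne u^{*}$, the hypothesis yields $|N_{H}(S)|\geq deg_{H}(s)\geq n/2$; and if $T$ contains any vertex $t\ne v^{*}$, then all neighbors of $t$ lie in $U\setminus S$, so $|U\setminus S|\geq deg_{H}(t)\geq n/2$, i.e.\ $|S|\leq n/2$. Whenever both such representatives exist, combining the two bounds gives $|N_{H}(S)|\geq n/2\geq |S|>|N_{H}(S)|$, the desired contradiction.

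It therefore remains to treat the degenerate configurations $S\subseteq\{u^{*}\}$ and $T\subseteq\{v^{*}\}$. The cases $S=\emptyset$ and $T=\emptyset$ give $|N_{H}(S)|=0=|S|$ and $|N_{H}(S)|=n\geq |S|$ respectively, each in conflict with $|N_{H}(S)|<|S|$. If $S=\{u^{*}\}$, then $|N_{H}(S)|<1$ forces $deg_{H}(u^{*})=0$, contradicting the hypothesis $deg_{H}(u^{*})>0$. Finally, if $T=\{v^{*}\}$, then $|S|+1\geq n+1$ forces $S=U$, whence $v^{*}$ has no neighbor in $U$, contradicting $deg_{H}(v^{*})>0$. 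These possibilities are exhaustive, so Hall's condition holds and a perfect matching exists.

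The only delicate part is the bookkeeping: one must check that when $S$ and $T$ each contain a non-exceptional vertex the $n/2$ bound really applies on both sides, and that the four extremal configurations above are precisely the situations in which the positive-degree hypotheses on $u^{*}$ and $v^{*}$ are needed. Beyond this small case split the argument is a routine Hall-deficiency calculation rather than a genuine obstacle.
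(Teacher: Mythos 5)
Your Hall-condition argument is correct and complete: the two-sided bound $|N_{H}(S)|\geq n/2\geq |S|$ in the generic case checks out, and the four degenerate configurations $S\in\{\emptyset,\{u^{*}\}\}$, $T\in\{\emptyset,\{v^{*}\}\}$ are exactly the exceptions and are each disposed of correctly using the positivity of $deg_{H}(u^{*})$ and $deg_{H}(v^{*})$. Note that the paper itself gives no proof of this lemma --- it is imported from the cited manuscript of Cabello and Jakovac --- so there is nothing internal to compare against; your deficiency-counting proof is the standard and natural one for a statement of this type.
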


We are now in a position to prove the main result of the paper.

\begin{theorem}{ Let $G$ be a $d$-regular graph that contains
no $4$-cycle. If $G$ is not super-edge-connected, then
$\varphi(G)=d+1$.
 }
\end{theorem}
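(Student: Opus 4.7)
The plan is to construct a b-coloring of $G$ with $d+1$ colors by exhibiting $d+1$ color-dominating vertices, one per color. Since $G$ is not super-edge-connected, fix a non-trivial minimum edge-cut $T$, so $|T|=\lambda(G)\le d$; a short minimality argument shows that $G\setminus T$ has exactly two connected components $G_1, G_2$. For $d\ge 4$, apply Lemma 1 to obtain vertices $a_i\in V(G_i)\setminus\mathrm{sat}(T)$ with $|N_G(a_i)\cap\mathrm{sat}(T)|\le 2$; the small cases $d\le 3$ can be handled by direct case analysis, as $d$-regular $C_4$-free graphs are then very restricted. Note that because $a_i\notin\mathrm{sat}(T)$, one has $N_G(a_i)\subseteq V(G_i)$, so in particular $N_G(a_1)\cap N_G(a_2)=\emptyset$.

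The partial coloring is seeded as follows: put $f(a_1)=f(a_2)=d+1$ (valid since $a_1, a_2$ lie in different components), label $N_G(a_1)=\{w_1,\dots,w_d\}$ and $N_G(a_2)=\{v_1,\dots,v_d\}$, set $f(w_j)=j$ for $j\in[d]$, and leave the colors $f(v_k)=\sigma(k)\in[d]$ to be determined by a permutation $\sigma$. Under this setup $a_1$ already realizes color $d+1$, and the remaining task is to choose $\sigma$ and extend $f$ to $V(G)$ so that each color $j\in[d]$ is realized by either $w_j$, via its second neighborhood $S_j:=N_G(w_j)\setminus\{a_1\}$, or by $v_{\sigma^{-1}(j)}$ in $G_2$, and so that $a_2$ becomes color-dominating for $d+1$.

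The key use of Lemma 2 is to produce $\sigma$. By $C_4$-freeness, the sets $S_j$ are pairwise disjoint of size $d-1$ (any common element other than $a_1$ would close a $4$-cycle), so the rainbow requirement at each $w_j$ reduces to an essentially local problem on $S_j$. Form the auxiliary bipartite graph $H$ with parts $\{v_1,\dots,v_d\}$ and $[d]$, where $\{v_k,i\}$ is an edge iff assigning $v_k$ the color $i$ is consistent with the already fixed seeds (so $v_k\not\sim w_i$) and with the planned rainbow coloring of the $S_j$'s. The at-most-two $v_k\in\mathrm{sat}(T)$ play the role of the special vertices $u^*, v^*$ of Lemma 2 (their color options are restricted by cross-cut adjacencies to the at-most-two $w_j\in\mathrm{sat}(T)$), while every other $v_k$ has no cross-cut neighbor and enjoys the full set of $d$ colors. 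A perfect matching of $H$ supplied by Lemma 2 yields the required $\sigma$.

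The main obstacle is the completion step: once $\sigma$ is fixed, one must actually color $S_j$, the analogous sets $T_k:=N_G(v_k)\setminus\{a_2\}$, and then all remaining vertices, in a way that is simultaneously a proper coloring and preserves the rainbow property at each intended color-dominating vertex. The pairwise disjointness of the $S_j$ (and of the $T_k$) forced by $C_4$-freeness decouples most of these local choices, and the bound $|N_G(a_i)\cap\mathrm{sat}(T)|\le 2$ from Lemma 1 is exactly what keeps cross-cut interactions within the slack afforded by Lemma 2's relaxed Hall-type condition; the residual vertices can then be colored greedily, since each one has only $d$ neighbors while $d+1$ colors are available.
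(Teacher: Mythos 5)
Your setup (a non-trivial minimum edge-cut $T$, the two anchors $a_1,a_2$ from Lemma \ref{majorlemma}, and Lemma \ref{mainlemma} as the matching tool) matches the paper's, but the proposal has a genuine gap exactly where you flag ``the main obstacle'': the completion step is not an afterthought that disjointness and greediness can absorb --- it is the entire difficulty, and the way you have arranged the seeds makes it fail. To realize color $j$ at $w_j$ you must rainbow-color $S_j=N_G(w_j)\setminus\{a_1\}$ with the $\approx d-1$ missing colors. The sets $S_j$ are pairwise disjoint, but they are not independent: a vertex of $S_j$ may have one neighbor in \emph{each} previously colored $S_{j'}$ (and $C_4$-freeness only caps this at one per $S_{j'}$, it does not eliminate it). So after rainbow-coloring $S_1,\dots,S_{j-1}$, a vertex of $S_j$ can have up to $j-1$ extra forbidden colors, its list shrinks to about $d-1-(j-1)$, and the hypothesis of Lemma \ref{mainlemma} (degree at least half the part size) breaks once $j$ exceeds roughly $d/2$. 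Your plan leaves open the possibility of realizing all, or nearly all, of the colors $1,\dots,d$ on the $w_j$ side, which cannot be pushed through with this lemma. The paper's proof is built around precisely this obstruction: it makes only $a_1$ and $\lfloor d/2\rfloor$ neighbors of $a_1$ color-dominating in $G_1$, then realizes the remaining $\lfloor (d-1)/2\rfloor$ colors on neighbors $z_i$ of $a_2$ in $G_2$, chosen by a counting argument (using $|T|\le d$ and $|N_G(a_2)\cap A_2|\le 2$) so that each second neighborhood $V'_i$ sends at most one edge across the cut into $A_1$; those at most one ``bad'' vertex and one ``bad'' color are exactly the exceptional $u^*,v^*$ of Lemma \ref{mainlemma}. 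Your single application of Lemma \ref{mainlemma} to choose the permutation $\sigma$ on $N_G(a_2)$ addresses none of this; the lemma has to be applied once per intended color-dominating neighbor, about $d-1$ times in total, with the degree bookkeeping done at each step.

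Two smaller points. First, the case $d=3$ cannot be dispatched by ``direct case analysis'': there are infinitely many cubic $C_4$-free graphs, and the paper has to invoke the Jakovac--Klav\v{z}ar classification (only the Petersen graph fails, and it is super-edge-connected). Second, your claim that the at most two vertices of $N_G(a_2)\cap\mathrm{sat}(T)$ can serve as the two exceptional vertices of Lemma \ref{mainlemma} is off by one in spirit: the lemma allows one exceptional vertex \emph{per side} of the bipartition, not two on the same side, so if both special vertices land in the $\{v_1,\dots,v_d\}$ part you would need a separate argument (the paper avoids this by coloring $N_G(a_2)\cap A_2$ explicitly with the two colors $1$ and $\lfloor\frac{d+2}{2}\rfloor$ before any matching is invoked).
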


\begin{proof}
{There is nothing to prove when $d\in \{0,1,2\}$. Also, Jakovac
and Klav\v{z}ar, in \cite{ja.kl}, showed that the only cubic graph
that contains no $4$-cycle and its b-chromatic number is not
equal to $4$, is the Petersen graph. Since the Petersen graph is
super-edge-connected, the proof is completed for $d=3$. So we
suppose that $d\geq 4$. Since $G$ is not super-edge-connected,
there exists a minimum edge-cut $T$ of $G$ which is not trivial.
Suppose that $G_{1},\ldots,G_{l}$ are connected components of
$G\setminus T$. For each $i$ in $[l]$, define $A_{i}:=\{x|\ x\in
V(G_{i}),\ \exists y\in V(G)\setminus V(G_{i})\ {\rm such\ that}
\ \{x,y\}\in T \}$. According to the Lemma \ref{majorlemma}, for
each $i$ in $[l]$, there exists some $a_{i}$ in $V(G_{i})
\setminus A_{i}$ for which $|N_{G}(a_{i})\bigcap A_{i}|\leq 2$.

Let $x_{1},\ldots,x_{d-|N_{G}(a_{1})\bigcap A_{1}|}$ be an
arbitrary ordering of all elements of $N_{G}(a_{1})\setminus
A_{1}$. Color the vertex $a_{1}$ by color $1$, and for each $i$ in
$[|N_{G}(a_{1})\setminus A_{1}|]$, assign the color $i+1$ to the
vertex $x_{i}$. Also, color all vertices that are in the set
$N_{G}(a_{1})\bigcap A_{1}$ by all colors that are in the set
$[d+1] \setminus [|N_{G}(a_{1})\setminus A_{1}|+1]$ injectively.
Then, color all vertices that are in the set $N_{G}(a_{2})\bigcap
A_{2}$ by some colors that are in the set $\{1, \lfloor
\frac{d+2}{2} \rfloor \}$ injectively.

The vertex $a_{1}$ is a color-dominating vertex with color $1$.
Now, our task is to color all the vertices in $(\bigcup
_{i=1}^{\lfloor \frac{d}{2} \rfloor}N_{G}(x_{i})) \setminus
(\{a_{1}\} \bigcup N_{G}(a_{1}))$ by colors in $[d+1]$ such that
for each $i$ in $[\frac{d}{2}]$, all colors that are in the set
$[d+1] \setminus \{i+1\}$, appear on $N_{G}(x_{i})$.

For each $i$ in $[\lfloor \frac{d}{2} \rfloor]$, set $V_{i}$,
$S_{i}$, and $C_{i}$, as follows:

$$
\begin{array}{rl}
\bullet\ V_{i}:= & N_{G}(x_{i})\setminus (\{a_{1}\}\bigcup N_{G}(a_{1})); \\
\bullet\ S_{i}:= & \{a_{1}\}\bigcup N_{G}(a_{1})\bigcup
(\bigcup_{j=1}^{i}V_{j})\bigcup (N_{G}(a_{2}) \bigcap
A_{2});  \\
\bullet\ C_{i}:=& ([d+1]\setminus \{1\} )\setminus (\mbox{the set
of colors
that were appeared on} \\
& \{x_{i}\}\bigcup(N_{G}(x_{i})\bigcap N_{G}(a_{1}))).
\end{array}$$

Since $G$ contains no $4$-cycle, for any two distinct natural
numbers $i$ and $j$ in $[\lfloor \frac{d}{2} \rfloor]$,
$V_{i}\bigcap V_{j}=\varnothing$. Also, since $G$ contains no
$4$-cycle, the maximum degree of the induced subgraph of $G$ on
$N_{G}(a_{1})$ is at most one. So $|V_{i}|=|C_{i}|=d-1$ or
$|V_{i}|=|C_{i}|=d-2$. Moreover, $|V_{i}|=|C_{i}|=d-2$ if and
only if $|N_{G}(x_{i})\bigcap N_{G}(a_{1})|=1$. Now, we follow
$\lfloor \frac{d}{2} \rfloor$ steps inductively. For each $i$ in
$[\lfloor \frac{d}{2} \rfloor]$, at $i$-th step, we only color
all vertices that are in $V_{i}$ by all colors that are in
$C_{i}$ injectively. Suppose by induction that $1\leq i\leq
\lfloor \frac{d}{2} \rfloor$ and for each $k$ in $[i-1]$, at
$k$-th step, we have only colored all vertices that are in
$V_{k}$ by all colors that are in $C_{k}$ injectively in such a
way that the resulting partial coloring on $S_{k}$ is a proper
coloring. Now, at $i$-th step, we want to color only all vertices
that are in $V_{i}$ by all colors that are in $C_{i}$ injectively
such that the resulting partial coloring on $S_{i}$ be a proper
partial coloring. Consider a bipartite graph $H_{i}$ with one part
$V_{i}$ and the other part $C_{i}$, which a vertex $v$ in $V_{i}$
is adjacent to a color $c$ in $C_{i}$ in the graph $H_{i}$ if and
only if (in the graph $G$) $v$ does not have any neighbors in
$S_{i}$ already colored by $c$. Such a coloring of all vertices
that are in $V_{i}$ by all colors that are in $C_{i}$ (as
mentioned) exists if and only if $H_{i}$ has a perfect matching.

Let $v$ be an arbitrary element of $V_{i}$. The set of neighbors
of $v$ in the graph $G$ that were already colored, is a subset of
$\{x_{i}\}\bigcup(\bigcup_{j=1}^{i-1}V_{j})\bigcup (N_{G}(a_{2})
\bigcap A_{2})$. Since $G$ contains no $4$-cycle, for each $j$ in
$[i-1]$, $v$ has at most one neighbor in $V_{j}$. Also, $v$ has
at most one neighbor in $N_{G}(a_{2}) \bigcap A_{2}$. However, the
color of the vertex $x_{i}$ does not belong to $C_{i}$.
Therefore, $deg_{H_{i}}(v)\geq |C_{i}|-i$. Also, since for each
$j$ in $[i-1]$, $v_{j}$ sees all colors of $[d+1]$ on its closed
neighborhood, each color of $[d+1]$ appears at most once on
$V_{j}$. Besides, each color of $[d+1]$ appears at most once on
$N_{G}(a_{2}) \bigcap A_{2}$. Therefore, for each $c$ in $C_{i}$,
$deg_{H_{i}}(c)\geq|V_{i}|-i$. Hence, for each $v$ in $V(H_{i})$,
$deg_{H_{i}}(v)\geq|V_{i}|-i$. Since $|V_{i}|\geq d-2$, if $1\leq
i\leq \lfloor \frac{d-2}{2} \rfloor$, then $deg_{H_{i}}(v)\geq
|V_{i}|-i\geq\frac{|V_{i}|}{2}$. In the case $i=\lfloor
\frac{d}{2} \rfloor$, since the color of each vertex in
$N_{G}(a_{2})\bigcap A_{2}$ belongs to the set $\{1, \lfloor
\frac{d+2}{2} \rfloor \}$ and also $C_{\lfloor \frac{d}{2}
\rfloor} \bigcap \{1, \lfloor \frac{d+2}{2} \rfloor \} =
\varnothing$,  for each $v$ in $V(H_{i})$,
$deg_{H_{i}}(v)\geq|V_{i}|-(i-1)\geq \frac{|V_{i}|}{2}$.

So Lemma \ref{mainlemma} implies that $H_{i}$ has a perfect
matching and we are done. We conclude that there exists a partial
coloring on $S_{\lfloor \frac{d}{2} \rfloor}$ by all colors of
the set $[d+1]$, such that $a_{1},x_{1},\ldots,x_{\lfloor
\frac{d}{2} \rfloor}$ are color-dominating vertices whose colors
are $1,2,\ldots,\lfloor \frac{d+2}{2} \rfloor$, respectively.

The next task is to color some uncolored vertices from $V(G_{2})$
in such a way that all colors in $[d+1]\setminus [\lfloor
\frac{d+2}{2} \rfloor]$ realize. The procedure is to find
$\lfloor \frac{d-1}{2} \rfloor$ suitable vertices
$z_{1},\ldots,z_{\lfloor \frac{d-2}{2} \rfloor}$ in $N_{G}(a_{2})
\setminus A_{2}$ in order to make them along $a_{2}$
color-dominating.

Let $N_{G}(a_{2}) \setminus A_{2}= \{y_{i}| 1\leq i \leq
|N_{G}(a_{2}) \setminus A_{2}| \}$. For each $i$ in
$[|N_{G}(a_{2}) \setminus A_{2}|]$, set $W_{i}$ and $e_{y_{i}}$
as follows:

\
\

\noindent $\bullet\ W_{i}:=N_{G}(y_{i})\setminus (\{a_{2}\}\bigcup
N_{G}(a_{2}))$;

\noindent $\bullet\ e_{y_{i}}:=|\{ \{s,t\} |\ \{s,t\} \in E(G),\
s\in W_{i},\ t\in A_{1}\}|$.

\
\

Since $G$ does not contain any $4$-cycles, for any two different
natural numbers $i$ and $j$ in $[|N_{G}(a_{2}) \setminus
A_{2}|]$, $W_{i}\bigcap W_{j}=\varnothing$. Without loss of
generality, we can assume that the sequence
$\{e_{y_{i}}\}_{i=1}^{|N_{G}(a_{2}) \setminus A_{2}|}$ is
decreasing, i.e., $e_{y_{1}} \geq \cdots \geq e_{y_{|N_{G}(a_{2})
\setminus A_{2}|}}$. We show that for each natural number $i$,
$|N_{G}(a_{2}) \setminus A_{2}|-\lfloor \frac{d-1}{2} \rfloor +1
\leq i \leq |N_{G}(a_{2}) \setminus A_{2}|$, $e_{y_{i}} \leq 1$.
Suppose, on the contrary, that for some natural number $i$,
$|N_{G}(a_{2}) \setminus A_{2}|-\lfloor \frac{d-1}{2} \rfloor +1
\leq i \leq |N_{G}(a_{2}) \setminus A_{2}|$, $e_{y_{i}}
> 1$. Therefore, for each $j$ in $[|N_{G}(a_{2}) \setminus
A_{2}|-\lfloor \frac{d-1}{2} \rfloor +1]$, $e_{y_{j}} \geq 2$.
Since each vertex in $N_{G}(a_{2}) \bigcap A_{2}$ is incident with
an edge of $T$, so

$$\begin{array}{rl}
  d\geq |T| \geq |N_{G}(a_{2}) \bigcap A_{2}| + 2 (|N_{G}(a_{2})
\setminus A_{2}|-\lfloor \frac{d-1}{2} \rfloor +1) &  =\\
|N_{G}(a_{2}) \bigcap A_{2}| + 2 (d-|N_{G}(a_{2}) \bigcap
A_{2}|- \lfloor \frac{d-3}{2} \rfloor )  &  \geq \\
 d+3-|N_{G}(a_{2}) \bigcap A_{2}| \geq d+3-2  & = d+1,
\end{array}$$

\noindent which is impossible. Accordingly, for each natural
number $i$, $|N_{G}(a_{2}) \setminus A_{2}|-\lfloor \frac{d-1}{2}
\rfloor +1 \leq i \leq |N_{G}(a_{2}) \setminus A_{2}|$,
$e_{y_{i}} \leq 1$. For each $i$ in $[\lfloor \frac{d-1}{2}
\rfloor]$, put $z_{i}:=y_{|N_{G}(a_{2}) \setminus A_{2}|-\lfloor
\frac{d-1}{2} \rfloor+i}$; therefore, $e_{z_{i}} \leq 1$.

Now, color the vertex $a_{2}$ by color $d+1$ and for each $i,\ 1
\leq i \leq \lfloor \frac{d-1}{2} \rfloor$, assign the color
$\lfloor \frac{d+2}{2} \rfloor + i $ to the vertex $z_{i}$. Also,
color all vertices that are in the set $N_{G}(a_{2}) \setminus (
A_{2} \bigcup \{z_{i}| 1 \leq i \leq  \lfloor \frac{d-1}{2}
\rfloor\} )$ by some colors of $[d+1]$ injectively in such a way
that all colors of the set $[d+1]$ appear on the closed
neighborhood of $a_{2}$.

For each $i$ in $[\lfloor \frac{d-1}{2} \rfloor]$, define
$V'_{i}$, $S'_{i}$, and $C'_{i}$, as follows:

\
\

\noindent $\bullet\ V'_{i}:=N_{G}(z_{i})\setminus
(\{a_{2}\}\bigcup N_{G}(a_{2}))$;

\noindent $\bullet\ S'_{i}:= \{a_{2}\}\bigcup N_{G}(a_{2})\bigcup$
$(\bigcup_{j=1}^{i}V'_{j}) \bigcup S_{\lfloor \frac{d}{2} \rfloor}
$;

\noindent $\bullet\ C'_{i}:=[d] \setminus ($the set of colors that
were appeared on $\{z_{i}\}\bigcup(N_{G}(z_{i})\bigcap
N_{G}(a_{2})))$.

\
\

The maximum degree of the induced subgraph of $G$ on
$N_{G}(a_{2})$ is at most one. So $|V'_{i}|=|C'_{i}|=d-1$ or
$|V'_{i}|=|C'_{i}|=d-2$. Furthermore, $|V'_{i}|=|C'_{i}|=d-2$ if
and only if $|N_{G}(z_{i})\bigcap N_{G}(a_{2})|=1$. Now, we follow
$\lfloor \frac{d-1}{2} \rfloor$ steps inductively. For each $i$ in
$[\lfloor \frac{d-1}{2} \rfloor]$, at $i$-th step, we only color
all vertices that are in $V'_{i}$ by all colors that are in
$C'_{i}$ injectively. Suppose by induction that $1\leq i\leq
\lfloor \frac{d-1}{2} \rfloor$ and for each $k$ in $[i-1]$, at
$k$-th step, we have only colored all vertices that are in
$V'_{k}$ by all colors in $C'_{k}$ injectively in such a way that
the resulting partial coloring on $S'_{k}$ is a proper coloring.
Now, at $i$-th step, we aim to color only all vertices that are in
$V'_{i}$ by all colors that are in $C'_{i}$ injectively such that
the resulting partial coloring on $S'_{i}$ be a proper partial
coloring. Consider a bipartite graph $H'_{i}$ with one part
$V'_{i}$ and the other part $C'_{i}$, that a vertex $v$ in
$V'_{i}$ is adjacent to a color $c$ in $C'_{i}$ in the graph
$H'_{i}$, if and only if (in the graph $G$) $v$ does not have any
neighbors in $S'_{i}$ already colored by $c$. Such a coloring of
all vertices that are in $V'_{i}$ by all colors that are in
$C'_{i}$ exists if and only if $H'_{i}$ has a perfect matching.
So our goal is to prove the existence of a perfect matching in
$H'_{i}$. In this regard, we again apply Lemma \ref{mainlemma}.

Since $e_{z_{i}} \leq 1$, there exists at most one edge between
$V'_{i}$ and $A_{1}$ in the graph $G$. Thus, there exists at most
one element in $V'_{i}$ that has a neighbors in $V(G_{1})$ (in the
graph $G$). If such a vertex exists, call it $v^{*}$. Therefore,
in the graph $H'_{i}$, the degree of each vertex in $V'_{i}
\setminus \{v^{*}\}$ is at least $|C'_{i}|-(i-1)$. Similarly,
there exists at most one color in $C'_{i}$ for which there is an
element in $V'_{i}$ that has a neighbor in $V(G_{1})$ with this
color. If such a color exists, call it $c^{*}$. Hence, in the
graph $H'_{i}$, the degree of each vertex in $C'_{i} \setminus
\{c^{*}\}$ is at least $|V'_{i}|-(i-1)$. We conclude that for each
vertex $x$ in $V_{H'_{i}} \setminus \{v^{*},c^{*}\}$,
$deg_{H'_{i}}(x)\geq |V'_{i}|-(i-1)$; and since $i \leq \lfloor
\frac{d-1}{2} \rfloor$ and $|V'_{i}|\geq d-2$, $deg_{H'_{i}}(x)
\geq \frac{|V'_{i}|}{2}$. Also, the degree of each vertex of
$H'_{i}$ is at least $|V'_{i}|-i$ which is positive.

We conclude that $|V'_{i}|=|C'_{i}|$ and the degree of each vertex
in $V_{H'_{i}} \setminus \{v^{*},c^{*}\}$ is at least
$\frac{|V'_{i}|}{2}$. Also, the degree of each vertex of $H'_{i}$
is positive. Accordingly, the Lemma~\ref{mainlemma} implies that
$H'_{i}$ has a perfect matching. Therefore, there exists a partial
coloring on $S'_{\lfloor \frac{d-1}{2} \rfloor}$ by all colors of
the set $[d+1]$ such that all colors realize. This partial
coloring can be extended to a coloring of the graph $G$ greedily.
So $\varphi(G)=d+1$.

}
\end{proof}

\ \\
{\bf Acknowledgements:} I am grateful to Professor Hossein
Hajiabolhassan for his many invaluable comments. Also, I would
like to express my gratitude to Professor Rashid Zaare-Nahandi
for his support during my M.Sc. and Ph.D. education.

\end{document}